\newtheorem{definition}{Definition}
\newtheorem{proposition}{Proposition}
\newtheorem{remark}{Remark}
\newcommand{\D}{\mathcal{D}}
\newcommand{\M}{\mathcal{M}}
\newcommand{\DMM}{\left[\partial \mathcal{M}\right]_-}
\newcommand{\A}{\mathcal{A}}
\newcommand{\DAM}{\left[\partial \mathcal{A}\right]_-}
\newcommand{\RR}{\mathbb{R}}
\begin{document}
%
% paper title
% Titles are generally capitalized except for words such as a, an, and, as,
% at, but, by, for, in, nor, of, on, or, the, to and up, which are usually
% not capitalized unless they are the first or last word of the title.
% Linebreaks \\ can be used within to get better formatting as desired.
% Do not put math or special symbols in the title.
\title{Transient Stability Analysis of Power Grids with Admissible and Maximal Robust Positively Invariant Sets}
%
%
% author names and IEEE memberships
% note positions of commas and nonbreaking spaces ( ~ ) LaTeX will not break
% a structure at a ~ so this keeps an author's name from being broken across
% two lines.
% use \thanks{} to gain access to the first footnote area
% a separate \thanks must be used for each paragraph as LaTeX2e's \thanks
% was not built to handle multiple paragraphs
%

%\author{Michael~Shell,~\IEEEmembership{Member,~IEEE,}
%        John~Doe,~\IEEEmembership{Fellow,~OSA,}
%        and~Jane~Doe,~\IEEEmembership{Life~Fellow,~IEEE}% <-this % stops a space
%\thanks{M. Shell was with the Department
%of Electrical and Computer Engineering, Georgia Institute of Technology, Atlanta,
%GA, 30332 USA e-mail: (see http://www.michaelshell.org/contact.html).}% <-this % stops a space
%\thanks{J. Doe and J. Doe are with Anonymous University.}% <-this % stops a space
%\thanks{Manuscript received April 19, 2005; revised August 26, 2015.}}

\author{Tim Aschenbruck, Willem Esterhuizen, and Stefan Streif%
\thanks{This project has received funding from the European Social Fund (ESF) Grant Number: 100327773. This work was partially funded by BMBF-Projekt 05M18OCA: "Verbundprojekt 05M2018 - KONSENS:Konsistente Optimierung und Stabilisierung elektrischer Netzwerksysteme".
	 (Corresponding author: S. Streif)}
\thanks{The authors are with the Technische Universit\"{a}t Chemnitz, Automatic Control and System Dynamics Laboratory (e-mail: tim.aschenbruck@etit.tu-chemnitz.de; willem.esterhuizen@etit.tu-chemnitz.de; stefan.streif@etit.tu-chemnitz.de). }
}

\maketitle

% As a general rule, do not put math, special symbols or citations
% in the abstract or keywords.
\begin{abstract}
The energy transition is causing many stability-related challenges for power systems.
%Power systems are facing big challenges due to the energy transition which influence its stability properties. 
Transient stability refers to the ability of a power grid's bus angles to retain synchronism after the occurrence of a major fault.
In this paper a set-based approach is presented to assess the transient stability of power systems. 
The approach is based on the theory of \emph{barriers}, to obtain an exact description of the boundaries of admissible sets and maximal robust positively invariant sets, respectively.
We decompose a power system into generator and load components, replace couplings with bounded disturbances and obtain the sets for each component separately.
From this we deduce transient stability properties for the entire system.
We demonstrate the results of our approach through an example of one machine connected to one load and a multi-machine system.
\end{abstract}

% Note that keywords are not normally used for peerreview papers.
\begin{IEEEkeywords}
%Nonlinear dynamical systems, nonlinear network analysis, power grids, power system stability, transient stability
Nonlinear systems, 
set-based network analysis,
power systems, 
rotating machine nonlinear analysis, 
power system transient stability
\end{IEEEkeywords}

% For peer review papers, you can put extra information on the cover
% page as needed:
% \ifCLASSOPTIONpeerreview
% \begin{center} \bfseries EDICS Category: 3-BBND \end{center}
% \fi
%
% For peerreview papers, this IEEEtran command inserts a page break and
% creates the second title. It will be ignored for other modes.
\IEEEpeerreviewmaketitle

\section{Introduction}
\label{sec:introduction}
%\IEEEPARstart{T}{he} electrical power grid is one of the largest and most complicated systems ever built, responsible for the transport of energy via transmission from generating devices to consuming loads. 
Safe grid operation requires various notions of stability to be enforced, such as voltage, frequency and rotor-angle stability, see for example \cite{kundur_etal_2004_definitionPSStability}. The \emph{transient stability problem} is concerned with the capability of a power system to retain synchronism after being subjected to a major contingency, for example, short circuits or faults in grid buses which lead to a loss of generation or a large load \cite[p.827]{kundur1994power_BOOK}.

For power system operators it is important to know which contingencies will lead to serious consequences. 
Typically, this will be checked with a long list of contingencies which need to be classified as either needing corrective action or not.
This may be achieved with a transient stability assessment system (TSA), which has two components: \emph{dynamic contingency screening} (DCS) where ``safe'' (or ``stable'') contingencies are screened out, and \emph{detailed time-domain stability analysis} where the remaining contingencies are simulated in detail over a time-interval, and then classified. For more information on TSAs the reader is referred to the references \cite{chiang2010onlineTSscreening} and \cite[p.447]{chiang2011direct_BOOK}.

There exist a number of approaches to perform DCS, including \emph{time-domain simulation} over a short time interval to identify highly unstable contingencies but they can not provide general stability limits for a safe operation of the system. %\cite{stott1979Time_Domain_Sim_overview,nagel2013highPerfomance_Time_domain_sim,huang2012largeScale_TIME_DOM_SIM}; 
\emph{Direct methods}, where stability arguments are made using Lyapunov functions or ``energy functions'', as in \cite{varaiya1985direct,ribbens1985direct_methods_A_SURVEY,Vittal_1983TS_EnergyFuntions,chiang1989closestUEP,Turitsyn16_LFfam,Turitsyn17_Aframework,Turitsyn2018robustnessMeasure,Althoff2016TS_opt_rational_LF}.
\emph{Hybrid approaches} combining direct methods and time-domain simulations \cite{Hagenmeyer2019}
and \emph{set-based methods}, such as  \cite{althoff2012TS_reachableSets,Althoff2017reachableSets,Turitsyn2017dynamic_fwReachableSets}, where forward reachable sets are found for the ``post-fault'' dynamics, and the references \cite{susuki2012hybridsys_TS_reachable_analysis,jin2010reachability} where backwards reachable sets are found that constitute regions from where the state is guaranteed to reach a neighbourhood of an equilibrium point. 
Another set-based method is presented in \cite{Henrion_2018} where invariant sets for synchronous machines are computed using the idea of \emph{occupation measures}, and it is shown that inner approximations may be found through the solution of semi-definite programs.
%TODO: add some weak points of the other methods:-not able for large scale systems (time sim.), even if then they cant identify single problematic nodes, for a certain class of systems, etc. (I prefer to write our strengths instead of they weaknesses )
%Whereas time-domain simulations also simulating harmless fault situations and are not able to prove stability margins, find an energy function which usually approximates the exact stability region of the system can be hard, in particular for large scale systems under the perspective of computational effort and accuracy.

In this work, we propose a new set-based DCS approach to analyse and assess the transient stability of power systems through the computation of \emph{maximal robust positively invariant} (MRPI) and \emph{admissible} sets, using the theory of \emph{barriers in constrained nonlinear systems} \cite{DeDona_Levine2013barriers,ESTERHUIZEN_2016,Ester_Asch_Streif_2019,aschenbruck2020sustainability}.
%NEW

%We find the largest robust invariant set which provides safe operation of the postfault system. 
%finding this set is in general very hard with direct methods.
%In comparison to other set based methods the computational effort is low, since we obtain the sets via backwards integration of the system.
% 
We consider a power grid (modelled as a graph) and decouple it into load and generator nodes. 
Then we consider each node individually, treating the states of neighbouring nodes as bounded disturbance inputs.
%We consider each load and generator node, of a power grid (modelled as a graph) individually  and treating the states of neighbouring nodes as bounded disturbance inputs. 
Afterwards we compute the sets %, like the largest robust invariant set, % which provides safe operation of the postfault system, 
for each individual node (which is one or two dimensional), working around the problem of extremely large dimension. % associated with other methods and possibly needed simplifications.
As will be shown, these sets are associated with \emph{safe}, \emph{potentially safe}, and \emph{unsafe} post-fault states. 
%NEW
Transient stability can then be checked through the location of the grid component post-fault state coordinates that are associated with a \emph{safe}, \emph{potentially safe}, or \emph{unsafe} operation.
%This brings the advantage of identifying critical nodes in the system for specific corrective actions.
% 

The used model for generator nodes is a second order ODE called the \emph{swing equation} and a standard model in the transient stability assessment.
Along with the decomposition, this second order model allows us to exploit some advantages of the used set based method, like the exact description of the boundaries of the sets.
For higher dimensions the computation of the sets is difficult and the manifold defining the boundaries are approximations based on a finite number of computed trajectories.

The outline of the paper is as follows. 
Section~\ref{sec_probsetup} introduces the transient stability problem in power grids and the power system models.
In Section~\ref{sec_newsetapproach} we present the new set-based approach for the transient stability assessment using three different sets: the largest set of safe operation, a set of potentially safe operations, and a set of unsafe operations which lead to stability problems. 
We summarize the theory to construct the MRPI and admissible set for constrained nonlinear systems and introduce the decomposition of the power grid model into its components.
This allows a simple verification of safe, potentially safe or unsafe operation for each node via the measurement of the state and its location with respect to the various sets.
Section~\ref{sec_detailed_analysis_gens} presents our main result with the detailed analysis of the introduced sets for the grid component dynamics describing generator and load nodes. 
In Section~\ref{section:examples} we provide two examples: one for a two-bus system, and one for a multi-machine-infinite-bus system. Section~\ref{section:conclusion} concludes the paper.

%we summarize the theory to construct the MRPI and admissible set for constrained nonlinear systems. 
%Section~\ref{sec_approach} introduces the new set-based approach concerning the use of these sets for transient stability assessment. In Sections~\ref{sec_detailed_analysis_gens} and \ref{sec_detailed_analysis_loads} we present our main result with the detailed analysis of the introduced sets for the dynamics describing the generator nodes and the load nodes. In Section~\ref{section:examples} we provide two examples: one for a two-bus system, and one for a double-machine-infinite-bus system. Section~\ref{section:conclusion} concludes the paper.

\section{Problem Setup} \label{sec_probsetup}
%\todo[inline]{work in Streif comments - DONE}

\subsection{The transient stability problem of power grids}

The power grid's stability is classified into three main issues: rotor angle, frequency, and voltage stability \cite{kundur_etal_2004_definitionPSStability}.
Transient stability concerns the rotor angle stability for short-term dynamics and describes the ability of a system to retain synchronism after a large disturbance occurs\cite[p.827]{kundur1994power_BOOK}, \cite[p.19]{chiang2011direct_BOOK},\cite{varaiya1985direct}.
Synchronicity of power systems means that the resulting angular difference between all its machines remains within certain bounds after such a large disturbance, which are usually called \emph{contingencies} and include for example different types of short circuits or the loss of generation and load buses.
Protective measures are often engaged after a contingency, altering the dynamics.
Thus, the mathematical description can be divided into three cases described by a set of three differential equation as in \cite{varaiya1985direct}
\begin{alignat}{2}
\dot{x} &= f_I(x),  &&\quad t \in\; ]-\infty,t_F[, \nonumber\\
\dot{x} &= f_F(x),  &&\quad t \in [t_F,t_C[, \nonumber\\  
\dot{x} &= f(x),    &&\quad t   \in [t_C,\infty[,\nonumber
\end{alignat}
where $f_I(x)$ denotes the initial dynamics before a fault occurs, called the \emph{pre-fault} dynamics, the time when the fault occurs is denoted by $t_F$ and the corresponding \emph{fault-on} dynamics by $f_F(x)$.
The fault is cleared from time $t_C$ and the system behaves according to the  \emph{post-fault} dynamics $f(x)$. 
Classically, transient stability is said to be guaranteed if the evolving trajectories of the fault-on dynamics remain in the  domain of attraction of an asymptotically stable equilibrium point of the post-fault dynamics\cite[p.5]{pai2012energy_BOOK}. 

%\todo[inline]{if we have space add a picture}

\subsection{Power system model}

Power systems are networks with different components such as generators and loads, connected via transmission lines.
These systems can be presented as a network of coupled oscillators \cite{dorfler2013synchronization}, where the generator dynamics is represented as a second order ODE and the load dynamics as a first order ODE.
This representation of a power grid is based on the structure-preserving model \cite{bergen1981structPresModel}.
We consider the grid as an undirected graph where generator and load buses are the nodes connected via transmission lines as edges.
Let $\mathcal{G}$ and $\mathcal{L}$ be the set of indices of all generator and load nodes, respectively, with $|\mathcal{G}| = p_G$, $|\mathcal{L}| = p_L$, and let $p= p_G + p_L$.
For each generator, the dynamics is described by the \emph{swing equation}, which is often used for transient stability analysis \cite{Althoff2016TS_opt_rational_LF,Althoff2017reachableSets,chiang1989closestUEP,jin2010reachability,Henrion_2018,Turitsyn16_LFfam,Turitsyn17_Aframework,Turitsyn2018robustnessMeasure}, defined by
\begin{align}
m_i\ddot{\delta}_i + k_i\dot{\delta}_i + \sum_{j\in \mathcal{N}_i} a_{ij}\sin\left(\delta_i - \delta_j\right)  &= P_{m_i},  i\in\mathcal{G}, \label{eq_generator}
\end{align}
where $m_i$, $k_i$, $P_{m_i}$, refers to the $i$-th machine's, rotor mass, damping coefficient, and mechanical input torque, respectively.
The constant $a_{ij}$ is defined as $a_{ij} \triangleq V_i V_j B_{ij}$, where $V_i$ and $V_j$ are the bus voltages of the $i$-th and $j$-th bus, respectively, and $B_{ij}$ is the susceptance of the transmission line between node $i$ and $j$. 
Since the conductances are very small compared to the susceptances, we assume that the admittance of transmission lines only consists of its susceptances.
The symbol $\mathcal{N}_i$ denotes the indices of all neighbouring nodes of the $i$-th bus. 
The $i$-th machine's state variable $\delta_i$ is the error between the machine's rotor angle and  a reference angle from an infinite bus, which represents the connection to a large reference network, taken as $0$. We let $\omega_i \triangleq \dot{\delta}_i$.

The dynamics of each load node is described by the following first order ODE
\begin{align}
k_i\dot{\delta}_i + \sum_{j\in \mathcal{N}_i} a_{ij}\sin\left(\delta_i - \delta_j\right)  &= - P_{d_i},  i\in\mathcal{L}, \label{eq_load}
\end{align}
where, $P_{d_i}$ refers to the active power demand of the $i$-th load and $k_i$ is its damping coefficient. \textcolor{black}{The state varaible $\delta_i$ refers to the bus angle of the $i$-th load bus.}
All parameters $k_i$, $a_{ij}$, $m_i$, $P_{m_i}$, $P_{d_i}$ are nonnegative, and assumed to be constant.

\section{A new set-based approach for transient stability analysis} \label{sec_newsetapproach}
In this section we introduce the theory of barriers to construct the admissible set and the maximal robust positively invariant set and show how to utilise them for the power grid's transient stability analysis.
Power grids consist of many components, which makes the analysis of their large models vastly complicated.
Accordingly, we present an approach, in the second part of this section, to decompose the problem into many small problems.

\subsection{Admissible and maximal robust positively invariant sets}\label{subsec_Sets}

Here, we summarise the main results from \cite{DeDona_Levine2013barriers} and \cite{Ester_Asch_Streif_2019} for the admissible set and the MRPI, respectively. 
We consider a nonlinear system subjected to state and input constraints
\begin{subequations}
	\begin{align}
	\dot{x}(t) & = f(x(t),d(t)), \; \label{sys_eq_1} 
	x(t_0)  = x_0, \; %\label{sys_eq_2} 
	d  \in \mathcal{D}, \\ %\label{sys_eq_3}\\
	g_i(x(t)) &\leq 0,\forall t\in[t_0,\infty[,\,\,i=1,2,\dots,p,\label{sys_eq_4}
	\end{align}
\end{subequations}
where $x(t)\in\RR^n$ denotes the state and $d(t)\in D \subset \RR^m$ denotes the disturbance input. We emphasise that the input $d$ will \emph{not} be considered as a control. 
%Rather, as will be clarified, we will interpret $d$ as an unknown bounded input.
%% Submission2 explanation of Assumptions %% 
%We impose the same assumptions as in \cite{DeDona_Levine2013barriers} and \cite{Ester_Asch_Streif_2019}, the most important being that $f$ and $g_i$ for $i=1,...,p$ are $C^2$ w.r.t. their arguments, all solutions of the system are bounded on finite intervals, and the set $ \{f(x,d) : d \in D\} $ is convex.
%%
We impose the same technical assumptions as those stated in \cite{DeDona_Levine2013barriers} and \cite{Ester_Asch_Streif_2019}, which are required for the rigorous analysis of the MRPI and admissible set:
\begin{description}
	\item[(A1)] The space $\mathcal{D}$ is the set of all Lebesgue measurable functions that map the interval $[t_0,\infty[$ to a set $D\subset\RR^m$, which is compact and convex.
	\item[(A2)] The function $f$ is $C^2$ with respect to $d\in D$, and for every $d$ in an open subset containing $D$, the function $f$ is $C^2$ with respect to $x\in\RR^n$.
	\item[(A3)] There exists a constant $0 < c < +\infty$ such that the following inequality holds true for all $x\in\RR^n$:
	$\sup_{d\in D}\vert x^T f(x,d) \vert \leq c(1+ \Vert x \Vert^{2} ).$
	\item[(A4)] The set $f(x,D)\triangleq\{f(x,d) : d\in D\}$ is convex for all $x\in \RR^{n}$.
	\item[(A5)] For every $i=1,2,\dots,p$, the function $g_i$ is $C^{2}$ with respect to $x\in\RR^n$, and the set $\{x:g_i(x) = 0\}$ defines a manifold.
\end{description}
In particular, the most important consequence of the assumptions is that the mentioned sets are closed.

The solution of \eqref{sys_eq_1} at time $t$ with initial condition $x_0 \in \RR^n$ and disturbance input   $d\in \mathcal{D}$ is denoted by $x^{(d,x_0,t_0)}(t)$ or, if the initial time or initial condition is clear from context, by $x^{(d,x_0)}$ or $x^{(d)}$, respectively.
We introduce the following sets 
\begin{align}
%G\triangleq& \{x : g(x)\preceq 0\},\nonumber\\
G\triangleq& \{x : g_i(x)\leq 0, \quad  \forall i \in \{1,2,...,p\}\},\nonumber\\
%G_-\triangleq& \{x : g_i(x) \prec 0\},\nonumber\\
G_-\triangleq& \{x : g_i(x) <0, \quad  \forall i \in \{1,2,...,p\}\},\nonumber\\
%G_0\triangleq& \{x : g_i(x) \circeq 0\}.\nonumber
G_0\triangleq& \{x : \exists i \in \{1,2,...,p\} \;\; \text{s.t.} \;\; g_i(x) =0\},\nonumber
\end{align}
%$\forall t \ge t_0$
and refer to $G$ as the constrained state-space. Sometimes we will write ``$\forall t$'' to mean for all $t\in[t_0,\infty[$ to lighten our notation. We denote by  $L_fg(x,d)\triangleq \nabla g(x)f(x,d)$ the Lie derivative of a continuously differentiable function $g:\mathbb{R}^n \rightarrow \mathbb{R}$ with respect to $f(x,d)$ at the point $x$. 
The set of all indices $\{i\in\{1,2,\dots,p\} : g_i(x) = 0\}$ is denoted by  $\mathbb{I}(x)$.
Given a set $S$, $|S|$ denotes its cardinality and $S^\mathsf{C}$ its complement. The function $\mathsf{sat}(x,\overline{x},\underline{x})$ denotes the saturation function with upper and lower bounds specified by $\overline{x}$ and $\underline{x}$, respectively.
%***
%The set $\{i\in\{1,2,\dots,p\} : g_i(x) = 0\}$ is denoted by  $\mathbb{I}(x)$. 
%Considering a set $S$, its interior and closure is denoted by $\Int(S)$ and $\mathsf{cl}(S)$, respectively.
%Given two sets $S_1$ and $S_2$, $S_1\subset S_2$ indicates that $S_1$ is a subset of $S_2$, which means every element of $S_1$ is also an element of $S_2$.
%We denote by $\mathsf{sat}\left(x,\overline{x},\underline{x}\right)$ three different cases, $\overline{x}$ if $x \geq \overline{x}$,  $\underline{x}$ if $x \leq \underline{x}$ and $x$ otherwise.
%
%
%****

\begin{definition}
	The \emph{admissible set}\footnote{also called the viability kernel \cite{aubin2009viability}} of the system \eqref{sys_eq_1} - \eqref{sys_eq_4}, denoted by $\A$, is the set of initial states for which there exists a $d\in \D$ such that the corresponding solution to \eqref{sys_eq_1} satisfies the constraints \eqref{sys_eq_4} for all future time.
	\begin{align*}
	\mathcal{A} \triangleq \left\{ x_0 \in \RR^n: \exists d\in\mathcal{D}, \;\;  x^{(d,x_0,t_0)}(t)\in G \;\; \forall t \in [t_0,\infty[ \;\right\}.
	\end{align*}
	\vspace{-.5cm}
\end{definition}
\begin{definition}
	A set $\Omega\subset\RR^n$ is said to be a \emph{robust positively invariant set} (RPI) of the system \eqref{sys_eq_1} provided that $x^{(d,x_0,t_0)}(t)\in\Omega$ for all $t\in[t_0,\infty[$, for all $x_0\in\Omega$ and for all $d\in\D$.
\end{definition}
\begin{definition}
	The \emph{maximal robust positively invariant set} (MRPI) of the system \eqref{sys_eq_1}-\eqref{sys_eq_4} contained in $G$, is the union of all RPIs that are subsets of $G$. Equivalently
	%\footnote{this equivalence is proved in \cite[Proposition~2]{Ester_Asch_Streif_2019}},
	\begin{align*}
	\mathcal{M} \triangleq \left\{ x_0\in\RR^n: x^{(d,x_0,t_0)}(t)\in G, \,\,\forall d\in\mathcal{D}, \;\;  \forall t \in [t_0,\infty[ \;\right\},
	\end{align*}
	\vspace{-.5cm}
\end{definition}
this equivalence is proved in \cite[Proposition~2]{Ester_Asch_Streif_2019}.
As elaborated on in \cite{DeDona_Levine2013barriers} and \cite{Ester_Asch_Streif_2019}, under the assumptions, which are needed to have compactness of the space of solutions, $\A$ and $\M$ are closed. We denote their boundaries by $\partial \mathcal{A}$ and $\partial \mathcal{M}$, and define the following sets: $\DAM \triangleq  \partial\mathcal{A}\cap G_-$ and $\DMM  \triangleq \partial\mathcal{M}\cap G_-.$
%\begin{align}
%	\DAM & \triangleq  \partial\mathcal{A}\cap G_-, \nonumber\\
%	\DMM & \triangleq \partial\mathcal{M}\cap G_-.\nonumber
%\end{align}
The sets $\DAM$ and $\DMM$ are called the \emph{barrier} and \emph{invariance barrier}, respectively, because they posses a so-called \emph{semi-permeability} property: if the state crosses $\DAM$, having initiated from $\A$, then it cannot re-enter $\A$ before first violating a constraint. Similarly, if the state crosses $\DMM$, having initiated from the complement of $\M$, it can never leave $\M$.
%\begin{definition}
%	Consider the set $\mathcal{M}_T = \{x_0\in\RR^n : x^{(d,x_0,t_0)}(t)\in G,\,\,\forall t\in[t_0,T],\,\,\forall d\in\mathcal{D}\}$ (resp. $\mathcal{A}_T = \{x_0\in\RR^n : \exists d\in\D,\,\, x^{(d,x_0,t_0)}(t)\in G,\,\,\forall t\in[t_0,T],\}$). The set $\M$ (resp. $\A$) is said to be \emph{stationary} with horizon $T<\infty$ provided that $\M = \M_T$ (resp. $\A = \A_T$).
%\end{definition}
We now summarise the main results which we will use to construct the sets $\A$ and $\M$.
Under the Assumptions (A1) - (A5), for every initial condition on the invariance barrier, $\bar{x}\in\DMM$ (resp. barrier, $\bar{x}\in\DAM$) there exists an input $\bar{d}\in\D$ such that the resulting integral curve, $x^{(\bar{d},\bar{x})}$, remains on the invariance barrier (resp. barrier) until it intersects $G_0$. 
Moreover, this integral curve along with its input $\bar{d}$ satisfies the following necessary conditions.
%% END WILLEMS TEXT %%
%% TIMS TEXT  %%
%Under the Assumptions (A1)-(A5) and stationarity\footnote{with horizon $T<\infty$, this concept is elaborated upon in \cite{Ester_Asch_Streif_2019}} of the set $\M$ (resp. $\A$).
%There exists a disturbance realization $\bar{d}\in\D$ for a considered point $\bar{x}$ on the barrier $\DMM$ (resp. $\DAM$).
%Every integral curve $x^{(\bar{d})}$ on $\DMM$ (resp. $\DAM$) and the corresponding disturbance realization, $\bar{d}\in D$, satisfy the necessary conditions 
%% END TIMS TEXT %%
There exists a nonzero absolutely continuous maximal solution $\lambda^{\bar{d}}$ to the adjoint equation
\begin{align}
\label{eq_adj}
&\dot{\lambda}^{\bar{d}}(t) = -\left( \frac{\partial f}{\partial x}(x^{(\bar{d})}(t),\bar{d}(t)) \right)^T \lambda^{\bar{d}}(t),\nonumber\\
\quad &\lambda^{\bar{d}}(\bar{t}) = (\nabla g_{i^*}(z))^T,
\end{align}
with $L_fg_{i^*}(z,\bar{d}(\bar{t})) \triangleq \max_{i\in\mathbb{I}(z)} L_fg_i(z,\bar{d}(\bar{t}))$
which maximizes (resp. minimizes) the Hamiltonian for almost every $t \leq \bar{t}$ %on the boundary of the set $\M$ (resp. $\A$):
\begin{align}
\max_{d\in D}\{\lambda^{\bar{d}}(t)^Tf(x^{(\bar{d})}(t),d) \} = \lambda^{\bar{d}}(t)^T f(x^{(\bar{d})}(t),\bar{d}(t)) = 0\label{eq_Hamil_M}
\end{align}
\vspace{-3mm}
\begin{align}
\Big(\Big.\text{resp.}\quad\min_{d\in D}\{\lambda^{\bar{d}}(t)^Tf(x^{(\bar{d})}(t),d) \} &= \lambda^{\bar{d}}(t)^T f(x^{(\bar{d})}(t),\bar{d}(t)) \nonumber \\ &= 0\Big.\Big)\label{eq_Hamil_A}.
\end{align}

Furthermore, these integral curves running along the barrier $\DMM$ (resp. $\DAM$) with the associated disturbance $x^{(\bar{d})}$ intersect $G_0$ in finite time at $\bar{t}$. 
We label the intersection point $z \triangleq x^{(\bar{d},\bar{x},t_0)}(\bar{t})\in G_0$.
For this point, the following condition holds
\begin{align}
\quad \,\,\,\, \max_{d\in D} \max_{i\in\mathbb{I}(z)}L_f g_i(z,d) 
&= L_fg_{i^*}(z,\bar{d}(\bar{t})) = 0 \label{thm1_ult_tan_M}
\end{align}
\begin{align}
\Big(\Big.\text{resp.}\quad \min_{d\in D} \max_{i\in\mathbb{I}(z)}L_f g_i(z,d) 
&= L_fg_{i^*}(z,\bar{d}(\bar{t})) = 0\Big.\Big). \label{thm1_ult_tan_A}
\end{align}
Since the integral curve  $x^{(\bar{d})}$ intersects $G_0$ in a tangential manner, this condition is the so-called \emph{ultimate tangentiality condition}.
%%%%%%%%%%%%% END THEOREM IN TEXTFORM %%%%%%%%%%%%%%

The proofs can be found in \cite{DeDona_Levine2013barriers} and \cite{Ester_Asch_Streif_2019}.
%With Theorem \ref{theorem_main}
With the stated conditions we are able to construct the invariance barrier for the MRPI $\mathcal{M}$ (resp. the barrier for the admissible set $\mathcal{A}$). The steps for doing so are as follows:
\begin{itemize}
	\item Determine the final condition for the system \eqref{sys_eq_1} by using \eqref{thm1_ult_tan_M} (resp. \eqref{thm1_ult_tan_A}) to identify the points of ultimate tangentiality $z \triangleq x^{(\bar{d},\bar{x},t_0)}(\bar{t})\in G_0$, i.e. where the integral curve along $\DMM$ (resp. $\DAM$) intersects $G_0$ at time $ \bar{t}$. %the boundary $G_0$ at time $ \bar{t}$.
	\item The integral curves characterizing $\DMM$ (resp. $\DAM$) can then be obtained via backward integration of the system dynamics \eqref{sys_eq_1} and adjoint equation \eqref{eq_adj} with the disturbance realisation $\bar{d}$, which satisfies \eqref{eq_Hamil_M} (resp. \eqref{eq_Hamil_A}) for almost all time.
\end{itemize}
Detailed information for the construction of the sets can be found in \cite{Ester_Asch_Streif_2019}.
We note that the conditions of the theorem are necessary; therefore some trajectories or parts of it %parts of the trajectories obtained in this manner 
may need to be ignored. Thus, we will refer to trajectories obtained via the maximum-like principle as \emph{candidate barrier trajectories}. 
Finally, we emphasise that one advantage of constructing the sets with this method, in comparison with other set-based methods, is that % the error in the computation is determined by the integration scheme used to find the trajectories.
these conditions give an exact description for an integral curve defining the boundary of the set.

%\section{A new set-based approach for transient stability analysis}\label{sec_approach}

\subsection{Approach to decompose large scale networked system}\label{subsec_approach}

In general the considered system for the transient stability analysis $\dot{x} = f(x)$ is a large scale system with many connected nodes representing the grid components.
%%%new%%%
The direct application of set-based methods to high dimensional systems is a hard task and often intractable \cite{ChemTomlin2018decompositionbrs}.
Therefore, the decomposition of these systems to analyse many small systems instead is a common approach.
%%%%%%%%%
Our power grid decomposition follows the principle as used in \cite{ChenTomlin2016decouplingdisturbances} and \cite{li2019statedecomp}, where the large system is decomposed into several subsystems $\dot{x}_i = f_i(x_i,d_i)$, with $d_i$ as disturbance input. This variable may be regarded as a \emph{decoupling variable} because in the overall power grid system the considered components are connected via the neighbouring state variable describing the angle of the corresponding bus, we decouple the components and replace the state variable with the disturbance input $d_i$.
%The main idea of our set-based approach follows in particular decomposition appraoches as in \cite{ChenToomlin} and \cite{bibid} is to decompose the system into several subsystems $\dot{x}_i = f_i(x_i,d_i)$, with $d_i$ as disturbance input, which may be regarded as a \emph{decoupling variable}.
Thus, we consider each generator and load node by itself along with its neighbours.
Hence, the entire grid size does not influence the dimension of the analysed systems, it only effects the number of terms of the sum in \eqref{eq_generator} and \eqref{eq_load}. 
Advantageously, the number of neighbours for a node is usually vastly smaller than the number of nodes in the entire grid.
\begin{remark}
	The space of disturbances $\D$ is the set of all Lebesgue measurable functions that map the interval $[t_0,\infty[$ to a compact and convex set $D\subset\mathbb{R}^m$.
	Therefore the disturbance can be any Lebesgue measurable function.
\end{remark}
%, which has to be analysed without a decomposition.
For safe grid operation we impose state constraints, which bound the angular difference with  $\underline{\delta}_i \leq \delta_i(t)\leq \overline{\delta}_i$ for generator and load nodes, where $\underline{\delta}_i, \overline{\delta}_i,$ are scalar constants. 
We interpret the $\delta$ state variables of the $i$-th node's neighbours as a \emph{disturbance input}. 
More specifically, we consider the $i$-th node, along with the indices of its neighbours $\mathcal{N}_i$. 
For all $j\in\mathcal{N}_i$ we let $d_i^j\triangleq \delta_j$ and, to lighten our notation, let $d_i\in\mathbb{R}^{|\mathcal{N}_i|}$ denote the vector formed by stacking the $d_i^j$'s. 
Moreover, we let $D_i^j\triangleq[\underline{\delta}_j,\overline{\delta}_j]$, and let $D_i$ denote the Cartesian product of the $|\mathcal{N}_i|$ intervals. 
We then obtain $\M_i$ and $\A_i$ for node $i$. In the case of generator nodes (i.e. $i\in\mathcal{G}$) the problem is to find the sets for the system
%\begin{align}
%	\dot{\delta}_i(t) & = \omega_i(t),\nonumber\\
%	\dot{\omega}_i(t) & = \frac{-k_i\omega_i(t) - \sum_{j\in \mathcal{N}_i} a_{ij}\sin\left(\delta_i(t) - d_i^j(t)\right)   + P_{m_i}}{m_i},\nonumber\\
%	\delta_i(t) & \in [\underline{\delta}_i,\overline{\delta}_i]\; \forall t, \quad
%	\omega_i(t)  \in [\underline{\omega}_i,\overline{\omega}_i]\;\forall t, \quad
%	d_i(t)  \in D_i\;\forall t.\nonumber
%\end{align}
\begin{align}
\dot{\delta}_i(t) & = \omega_i(t),\nonumber\\
\dot{\omega}_i(t) & = \frac{-k_i\omega_i(t) - \sum_{j\in \mathcal{N}_i} a_{ij}\sin\left(\delta_i(t) - d_i^j(t)\right)   + P_{m_i}}{m_i},\nonumber
\end{align}
with $\delta_i(t) \in [\underline{\delta}_i,\overline{\delta}_i]\; \forall t $ and
%\omega_i(t) \in [\underline{\omega}_i,\overline{\omega}_i]\;\forall t, \;
$d_i(t) \in D_i\;\forall t.$
In the case of load nodes the dynamics are described by \eqref{eq_load}.
We again emphasise that the ``disturbance inputs'' $d_i^j$ are different from the aforementioned contingencies.% that cause the system to leave the nominal operating point by evolving according to the fault-on dynamics.

%\sloppy
We can then use the computed sets as follows. Let $x\triangleq~\hspace{-.15cm}(\delta_1,\omega_1,\dots,\delta_{p_G},\omega_{p_G},\delta_{p_G + 1},\dots,\delta_{p_G+p_L})^T$ denote the post-fault state of the overall system; $x_i \triangleq (\delta_i,\omega_i)^T$ for $i\in \mathcal{G}$; and $x_i \triangleq \delta_i$ for $i\in \mathcal{L}$. Then, the post-fault state, $x$, is:
\begin{itemize}
	\item \emph{safe} if $x_i\in\mathcal{M}_i$ for all $i\in\mathcal{G}\cup\mathcal{L}$,
	\item \emph{potentially safe} if $x_i\in\mathcal{A}_i$ for at least one $i\in\mathcal{G}\cup\mathcal{L}$, and $x_i\notin\mathcal{A}^{\mathsf{C}}_i$ for all $i\in\mathcal{G}\cup\mathcal{L}$,
	\item \emph{unsafe} if there exists an $i\in\mathcal{G}\cup\mathcal{L}$ such that $x_{i}\in\mathcal{A}^{\mathsf{C}}_i$.
\end{itemize}
%\fussy

In contrast to what is done classically in the transient stability analysis, the invariance property of $\M$ does not guarantee the convergence of a trajectory initiating in $\M$ to a stable equilibrium point. 
We guarantee that the trajectories of the post-fault dynamics initiating in $\M$ remain within acceptable bounds.
Furthermore, with $\A$ we obtain an additional region that potentially guarantees that the trajectories remain within acceptable bounds, depending on the disturbance input, whereas all trajectories initiating from the complement of $\A$ definitely lead to constraint violations.

An advantage of this approach is that, given a post-fault state which is either potentially safe or unsafe, ``critical'' nodes may be identified as being ones that may experience constraint violations, allowing an operator to focus its resources accordingly. 
%We also note that post-fault states classified as potentially safe and unsafe would require detailed stability analysis via time simulation (the second component of a TSA), whereas those classified as safe would not.
%NEW (version of last sentence)
We also note that only post-fault states classified as potentially safe would require detailed stability analysis, whereas those classified as safe or unsafe would not, because they will never violate the constraints or they will definitely, respectively.
The idea is illustrated in Fig. \ref{fig:concept_illustration}. 
%NEW (deleted)
%It can be observed that even in large networks, nodes often have a small number of neighbours. 
%Thus, our approach is applicable to small and large networks alike.

\begin{figure}[h]
	\begin{center}\
		\includegraphics[width=3\linewidth,height=12cm,keepaspectratio]{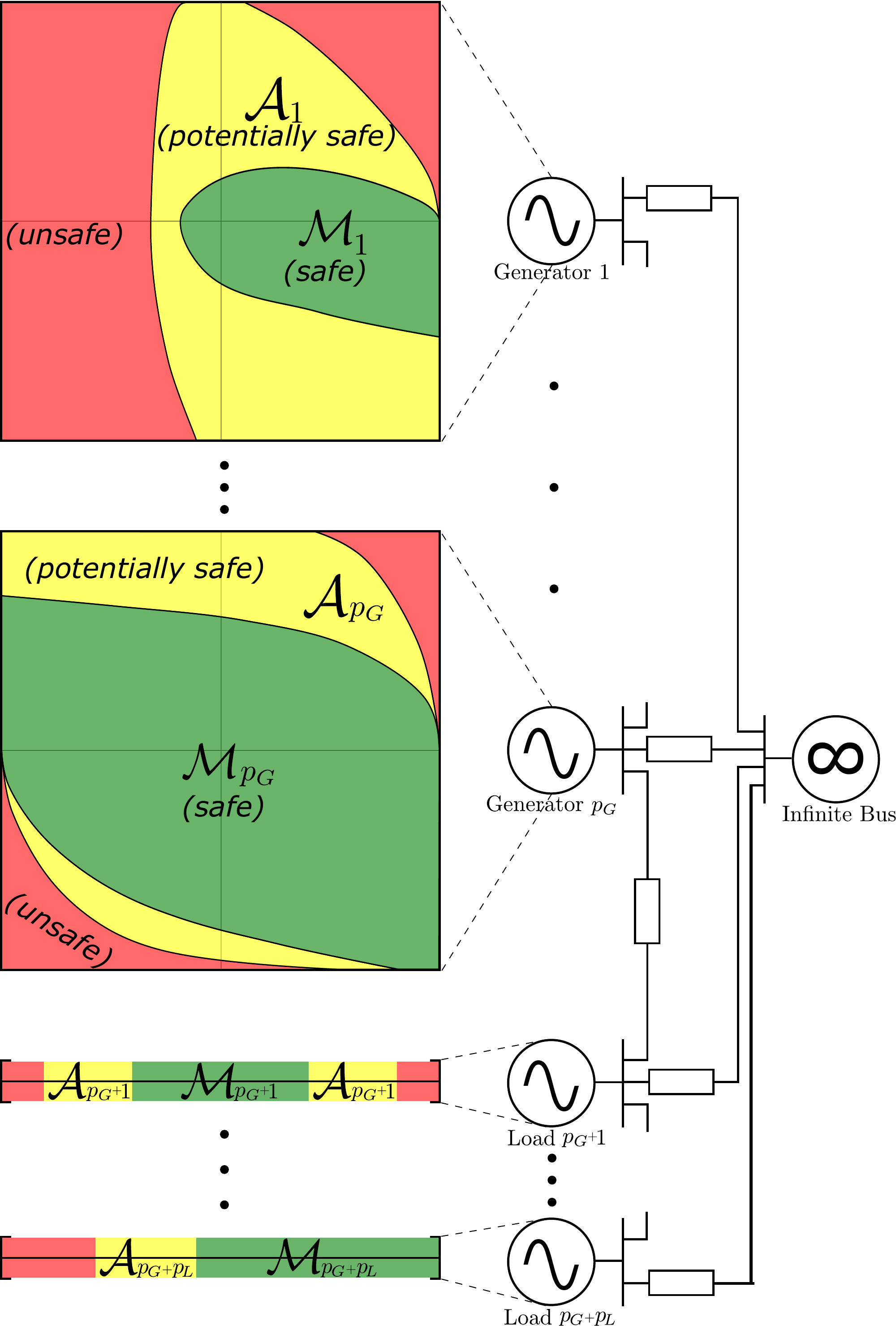} 
		\caption{Illustration of the set-based approach towards transient stability analysis} 
		\label{fig:concept_illustration}                             
	\end{center} 
	\vspace{-5mm}                               
\end{figure}

%\section{Detailed analysis of the sets for generator nodes}\label{sec_detailed_analysis_gens}
\section{Detailed analysis of the sets for decomposed power grid nodes}\label{sec_detailed_analysis_gens}

%In Section~\ref{sec_Sets} we introduced the admissible set and the MRPI, explaining that their barriers are constructed via backwards integration. In Section~\ref{sec_approach} we then explained how we intend to apply these sets to the analysis of transient stability.
This section presents the main result of our paper, with the detailed analysis of the sets' construction for generator and load nodes. 
Furthermore we present new and simply verifiable conditions on the swing equations parameters under which it is guaranteed that candidate barrier trajectories exist.

\subsection{Generator nodes disturbance realisation and points of ultimate tangentiality} \label{subsec_G_dbarUTC}

Invoking \eqref{eq_Hamil_M} for the $i$-th generator %(we drop the superscripts from the state and adjoint variables to lighten our notation) 
we have that $\bar{d}_i$ associated with the MRPI's barrier should satisfy
%\vspace{-3mm}
\begin{align*}
&\lambda_i(t)^Tf(x_i(t),\bar{d}_i(t)) = \max_{d_i\in D_i}\left\{ \vphantom{\frac{-k_i\omega_i(t) - \sum_{j\in \mathcal{N}_i} a_{ij}\sin\left(\delta_i(t) - d_i^j\right) + P_{m_i}}{m_i}}	%\Big(\Big. \Big.\Big)
\lambda_i^1(t)\omega_i(t) + \lambda_i^2(t) \right.\\
& \left. \left( 
\frac{-k_i\omega_i(t) - \sum_{j\in \mathcal{N}_i} a_{ij}\sin\left(\delta_i(t) - d_i^j\right) + P_{m_i}}{m_i}
\right)
\right\} = 0\nonumber
\end{align*}
for almost every $t$, where $\lambda_i\triangleq (\lambda_i^1, \lambda_i^2)^T$. We deduce that
\begin{align}
\bar{d}_i^j(t) = 
%\begin{split}
\begin{cases}
\label{dist_cases_invbarrier}
\mathsf{sat}(\delta_i(t) + \frac{\pi}{2},\overline{\delta}_j,\underline{\delta}_j) & \text{if} \quad \lambda_i^2(t)\geq 0,\\
\mathsf{sat}(\delta_i(t) - \frac{\pi}{2},\overline{\delta}_j,\underline{\delta}_j) & \text{if} \quad \lambda_i^2(t)< 0.
\end{cases}
%\end{split}
\end{align}
Similarly from \eqref{eq_Hamil_A}, for $\bar{d}_i$ associated with $\DAM$ we have
\begin{align}
\bar{d}_i^j(t) = 
%\begin{split}
\begin{cases}
\label{dist_cases_barrier}
\mathsf{sat}(\delta_i(t) - \frac{\pi}{2},\overline{\delta}_j,\underline{\delta}_j) & \text{if} \quad \lambda_i^2(t)\geq 0,\\
\mathsf{sat}(\delta_i(t) + \frac{\pi}{2},\overline{\delta}_j,\underline{\delta}_j) & \text{if} \quad \lambda_i^2(t)< 0.
\end{cases}
%\end{split}
\end{align}

%\subsection{Identification of points of ultimate tangentiality}
We now turn our attention towards identifying candidate points of ultimate tangentiality located on the state constraint functions of the $i$-th generator node. Recall from Subsection~\ref{subsec_approach} that every generator node has two state constraint functions. 
We label them as follows: $g_i^1(x_i) = \delta_i - \overline{\delta}_i$ and $g_i^2(x_i) = -\delta_i + \underline{\delta}_i$.
We will label points of ultimate tangentiality $z_i \triangleq (\check{\delta}_i, \check{\omega}_i)$. %, and again drop the superscripts to lighten notation.

Invoking condition \eqref{thm1_ult_tan_M} for the MRPI we get
\begin{align*}
&\max_{d_i\in D_i } \left\{\nabla g_i^1(z_i) f(z_i,d_i) \right\} = \max_{d_i\in D_i }\{\check{\omega}_i\} = \check{\omega}_i = 0, \\
&\max_{d_i\in D_i } \left\{\nabla g_i^2(z_i) f(z_i,d_i) \right\} = \max_{d_i\in D_i }\{-\check{\omega}_i\} = -\check{\omega}_i = 0,
\end{align*}
which identifies two points, $z_i = (\underline{\delta}_i,0)$ and $z_i = (\overline{\delta}_i,0)$. It is easy to show that these two points are also points of ultimate tangentiality for the admissible set.

\subsection{Existence of candidate barrier trajectories for generator nodes}
%Recall that the conditions of Theorem \ref{theorem_main} 
Recall that the stated conditions in Section \ref{sec_newsetapproach}
are necessary, and that some obtained integral curves or parts of them may not form parts of the barriers, $\DMM$ and $\DAM$, and need to be ignored. 
This is clearly the case for parts of integral curves that are outside $G$. In this section we provide conditions, in terms of a generator node's parameters, under which it is guaranteed that a trajectory evolves backwards into $G_-$, and thus may form part of a barrier. 
Furthermore, we briefly discuss what we call the ``bounce phenomenon'', which, when it occurs, implies that a trajectory is not a barrier trajectory.

%\subsubsection{Existence of candidate trajectories evolving backwards into interior of $G$}
First, we consider the existence of candidate trajectories evolving backwards into the interior of $G$.
Therefore, the next proposition is concerned with barrier trajectories ending on $g_i^1$ and $g_i^2$.
\begin{proposition}\label{prop:existence_traj_deltacstr}
	There exists a candidate barrier trajectory, associated with the MRPI or admissible set, partly contained in $G_-$ and ending at the point of ultimate tangentiality $(\overline{\delta}_i,0)^T$ (resp. $(\underline{\delta}_i,0)^T$), if and only if 
	\begin{align*}
	&\sum_{j\in\mathcal{N}_i} a_{ij}\sin\left(\overline{\delta}_i - \bar{d}_i^j(\bar{t})\right)  - P_{m_i} > 0\\\nonumber
	\bigg(\bigg.\text{resp.}\quad&\sum_{j\in\mathcal{N}_i} a_{ij}\sin\left(\underline{\delta}_i - \bar{d}_i^j(\bar{t})\right)  - P_{m_i} < 0\bigg.\bigg).
	\end{align*}
	\vspace{-.5mm}
\end{proposition}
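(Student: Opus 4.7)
The plan is to use a second-order Taylor expansion of $\delta_i$ at the candidate point of ultimate tangentiality $z_i = (\overline{\delta}_i, 0)^T$ and time $\bar{t}$. Because $\omega_i(\bar{t}) = 0$ at this point, we have $\dot{\delta}_i(\bar{t}) = 0$, so the first-order term in
\begin{equation*}
\delta_i(\bar{t}-\epsilon) = \overline{\delta}_i - \epsilon\,\omega_i(\bar{t}) + \frac{\epsilon^2}{2}\dot{\omega}_i(\bar{t}) + O(\epsilon^3)
\end{equation*}
vanishes. The sign of $\delta_i(\bar{t}-\epsilon) - \overline{\delta}_i$ for small $\epsilon>0$ is therefore governed entirely by $\dot{\omega}_i(\bar{t})$, and the candidate trajectory lies in $G_-$ immediately before $\bar{t}$ if and only if $\dot{\omega}_i(\bar{t})<0$.

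To turn this into the claimed inequality I would substitute $z_i = (\overline{\delta}_i, 0)$ into the second component of the generator dynamics, obtaining
\begin{equation*}
\dot{\omega}_i(\bar{t}) = \frac{1}{m_i}\Bigl(P_{m_i} - \textstyle\sum_{j\in\mathcal{N}_i}a_{ij}\sin(\overline{\delta}_i - \bar{d}_i^j(\bar{t}))\Bigr).
\end{equation*}
Since $m_i>0$, the condition $\dot{\omega}_i(\bar{t})<0$ is equivalent to $\sum_{j\in\mathcal{N}_i}a_{ij}\sin(\overline{\delta}_i - \bar{d}_i^j(\bar{t})) - P_{m_i} > 0$. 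The ``only if'' direction then follows by assuming a candidate trajectory lying in $G_-$ just before $\bar{t}$ and reading the expansion backward; the ``if'' direction follows by backward integrating the swing equation jointly with the adjoint equation from $(\bar{t},z_i)$, the resulting trajectory lying in $G_-$ for a short time when the strict inequality holds. The case $z_i = (\underline{\delta}_i, 0)^T$ is symmetric: entering $G_-$ backward now requires $\dot{\omega}_i(\bar{t})>0$, which produces the opposite inequality.

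The subtle point is that the disturbance $\bar{d}_i^j(\bar{t})$ appearing in the condition must be unambiguous, whereas the transversality condition forces $\lambda_i(\bar{t}) = (1,0)^T$, placing $\lambda_i^2(\bar{t}) = 0$ exactly on the case split in the saturation rules for the MRPI and admissible set. To resolve this I would invoke left-continuity: the adjoint equation yields $\dot{\lambda}_i^2(\bar{t}) = -\lambda_i^1(\bar{t}) + (k_i/m_i)\lambda_i^2(\bar{t}) = -1 \neq 0$, so $\lambda_i^2>0$ throughout a left neighbourhood of $\bar{t}$. Consequently $\bar{d}_i^j(\bar{t})$ is determined by the top branch of the relevant saturation rule in each case, so the stated inequality is well-posed. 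This is the main (and only mild) obstacle; once settled, the Taylor expansion closes both directions of the biconditional.
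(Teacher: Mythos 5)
Your proposal is correct and follows essentially the same route as the paper: the paper likewise uses the adjoint terminal condition $\lambda_i(\bar{t})=(1,0)^T$ with $\dot{\lambda}_i^2(\bar{t})=-1$ to fix the sign of $\lambda_i^2$ (and hence the branch of $\bar{d}_i^j$) just before $\bar{t}$, and then argues that the trajectory enters $G_-$ backward if and only if $\dot{\omega}_i(\bar{t})<0$, which your second-order Taylor expansion simply makes explicit. The only shared (and equally untreated in the paper) loose end is the degenerate case $\dot{\omega}_i(\bar{t})=0$, where the strict inequality in the ``only if'' direction would require examining higher-order terms.
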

\begin{proof}
	From \eqref{eq_adj}, the adjoint equation for machine $i\in\mathcal{G}$ is given by:
	$
	\dot{\lambda}_i^1(t)   = ( \sum_{j\in\mathcal{N}_i} \frac{a_{ij}}{m_i}\cos\left(\delta_i-\delta_j\right) ) {\lambda}_i^2(t)$ and $ 
	\dot{\lambda}_i^2(t)  = -{\lambda}_i^1(t) + \frac{k_i}{m_i} {\lambda}_i^2(t), 
	$
	%	\begin{align*}
	%		\dot{\lambda}_i^1(t)  & = \left( \sum_{j\in\mathcal{N}_i} \frac{a_{ij}}{m_i}\cos\left(\delta_i-\delta_j\right) \right) {\lambda}_i^2(t), \nonumber\\
	%		\dot{\lambda}_i^2(t) & = -{\lambda}_i^1(t) + \frac{k_i}{m_i} {\lambda}_i^2(t). \nonumber
	%	\end{align*}
	with the final condition $\lambda_i(\bar{t}) = (1,0)^T$ associated with $g_i^1$. Thus, it follows that $\dot{\lambda}_i^2(\bar{t}) = -1$, which implies that $\lambda_i^2$ is increasing, going backwards in time from $\bar{t}$. Thus, $\lambda_i^2(t) \geq 0$ over some period of time before $\bar{t}$. A barrier trajectory along with the associated adjoint satisfies the Hamiltonian maximisation condition, \eqref{eq_Hamil_M}. Thus, the adjoint vector is orthogonal to the velocity vector all along the barrier trajectory. We can conclude that $\dot{\omega}(\bar{t}) < 0$, for otherwise the barrier trajectory would have approached $(\bar{\delta}_i,0)^T$ from the set $\{(\delta_i,\omega_i):\delta_i > \overline{\delta}_i\}$, which is outside $G$. The condition stated in the proposition then follows immediately from this, and adapts easily to the point $(\underline{\delta}_i,0)^T$.
\end{proof}
Note that the conditions of Propositions \ref{prop:existence_traj_deltacstr} does not depend on the mass or damping of the machine. 

%\subsubsection{The ``bounce phenomenon''}
Second, what we call the ``bounce phenomenon'' has been observed for some candidate trajectories.
%For some examples it has been observed that %the disturbance realisation 
Here, $\bar{d}_i^j$ may abruptly switch, causing the associated state trajectory to experience a ``bounce'', the resulting integral curve not clearly defining the boundary of a set and needing to be ignored. Let us denote the time at which a bounce occurs by $\hat{t}$. 
Recall that $\bar{d}_i^j$ is a function of $\lambda_i^2$, see equations \eqref{dist_cases_invbarrier} and \eqref {dist_cases_barrier}, and thus when this switch occurs $\lambda_i^2(\hat{t}) = 0$. 
Recall from 
%Theorem~\ref{theorem_main} 
the stated conditions in Section \ref{sec_newsetapproach}
that the adjoint is nonzero. 
Thus, when $\lambda_i^2(\hat{t}) = 0$, it is true that $\lambda_i^1(\hat{t}) \neq 0$, which implies, from \eqref{eq_Hamil_M} or \eqref{eq_Hamil_A}, that $\lambda_i^1(\hat{t})\omega_i(\hat{t}) = 0$, which implies $\omega_i(\hat{t}) = 0$. 
Thus, bounces can only occur if the candidate barrier trajectory intersects the $\delta_i$-axis and if $\dot{\omega}_i$ changes sign at $\hat{t}$. 
%It would be interesting to further investigate this phenomenon in the future.

%\section{Detailed analysis of the sets for load nodes}\label{sec_detailed_analysis_loads}
\subsection{Detailed analysis of the sets for load nodes}\label{sec_detailed_analysis_loads}
The analysis of finding $\M$ and $\A$ for loads is easier than for generators due the one-dimensional load dynamics \eqref{eq_load}. For a one-dimensional system the sets are connected. To see this, consider $\M$ and suppose it were not connected. Then there would exist an initial condition $x_0\in\M^\mathsf{C}$, along with a disturbance realisation, such that the resulting integral curve violates a constraint in the future, but which would first have to penetrate $\M$, which is impossible. A similar argument holds true for $\A$. Thus, to specify the sets we only need to find their lower and upper bounds. For the set $\M$ these are specified from the solution of the following two problems
\begin{align}
&\min_{\underline{\delta}_i \leq \delta_i\leq \overline{\delta}_i } \delta_i\quad \mathrm{subject}\,\,\mathrm{to}\quad \min_{d_i\in D_i } \dot{\delta}_i \geq 0, \label{eq_load_condlbM}\\
&\max_{\underline{\delta}_i \leq \delta_i\leq \overline{\delta}_i } \delta_i\quad \mathrm{subject}\,\,\mathrm{to}\quad \max_{d_i\in D_i } \dot{\delta}_i \leq 0.		   \label{eq_load_condubM}
\end{align}
For the set $\A$, they are obtained from
\begin{align}
&\min_{\underline{\delta}_i \leq \delta_i\leq \overline{\delta}_i } \delta_i\quad \mathrm{subject}\,\,\mathrm{to}\quad \max_{d_i\in D_i } \dot{\delta}_i \geq 0, \label{eq_load_condlbA}\\
&\max_{\underline{\delta}_i \leq \delta_i\leq \overline{\delta}_i } \delta_i\quad \mathrm{subject}\,\,\mathrm{to}\quad \min_{d_i\in D_i }\dot{\delta}_i \leq 0.		   \label{eq_load_condubA}
\end{align}

\section{Examples}\label{section:examples}
%In this section we present our approach in detail on a very common and simple one machine one bus example.
%The second example is a multi machine system.

\subsection{Sets for a two-bus system}
%-1 machine 1 load (only SMIB for d=0) with admissible set and MRPI
%
%-------------------------------------------------------
%\cite[p.38]{AndersonFouad2003_PowerSysControlStab} 
To illustrate our approach in detail we consider a common one machine one load system as shown in Fig. \ref{fig:twobus_grid}.
\begin{figure}[h]
	%	\vspace{-2mm}  
	\begin{center}\
		\includegraphics[width=7cm,height=7cm,keepaspectratio]{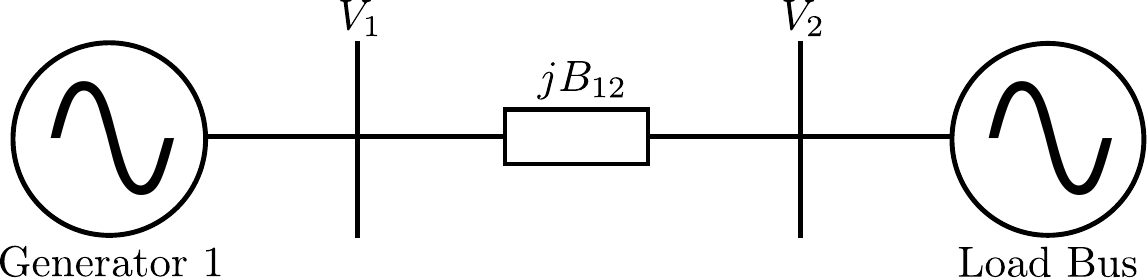} 
		\caption{Two-bus system with one generator and one load bus} 
		\label{fig:twobus_grid}                             
	\end{center}        
	%\vspace{-2mm}                        
\end{figure}
The dynamics of the generator node is:
$\dot{\delta}_1  = \omega_1$, $
\dot{\omega}_1  = \frac{-k_1\omega_1 - a_{12}\sin\left(\delta_1-\delta_{2}\right) + P_{m_1}}{m_1},
$
%\begin{align}
%	\begin{split}
%		\dot{\delta}_1 & = \omega_1\\
%		\dot{\omega}_1 & = \frac{-k_1\omega_1 - a_{12}\sin\left(\delta_1-\delta_{2}\right) + P_{m_1}}{m_1},\nonumber
%	\end{split}
%\end{align}
and that of the load is:
$
\dot{\delta}_2 = \frac{-a_{12}\sin\left(\delta_2-\delta_1\right) - P_{d_2}}{k_2}.
$
We identify $x_1\triangleq (\delta_1,\omega_1)^T$, $x_2\triangleq \delta_2$, $d_1^2 \triangleq \delta_2$ and $d_2^1 \triangleq \delta_1$. 
First, we concentrate on the generator node.
We use the parameters as in \cite{Turitsyn16_LFfam}: $k_1=1$ p.u., $m_1=1$ p.u., $a_{12}=0.8$ p.u., and $P_{m_1}=0.4$ p.u.. 
We impose the constraints $|\delta_1|\leq \frac{\pi}{2}$ and assume $|\delta_2|\leq\bar{\delta}_2$. 
We proceed to find the barriers ending on the ultimate tangentiality points $(\pm\frac{\pi}{2},0)$ using \eqref{dist_cases_invbarrier} and \eqref{dist_cases_barrier}, assuming a different bound $\overline{\delta}_2$, as shown in Fig.~\ref{fig:twobus_A&M_G}. 
Clearly, each MRPI is a subset of its corresponding admissible set, this fact illustrated with different line types. 
We also note that $\M$ grows with decreasing $\overline{\delta}_2$, but $\A$ shrinks as this happens. When $\overline{\delta}_2 = 0$ (load is the \emph{infinite bus}) the sets coincide.
\begin{figure}
	\centering
	\begin{subfigure}[b]{1\linewidth}
		\centering
		\includegraphics[width=6.5\linewidth,height=6.5cm,keepaspectratio]{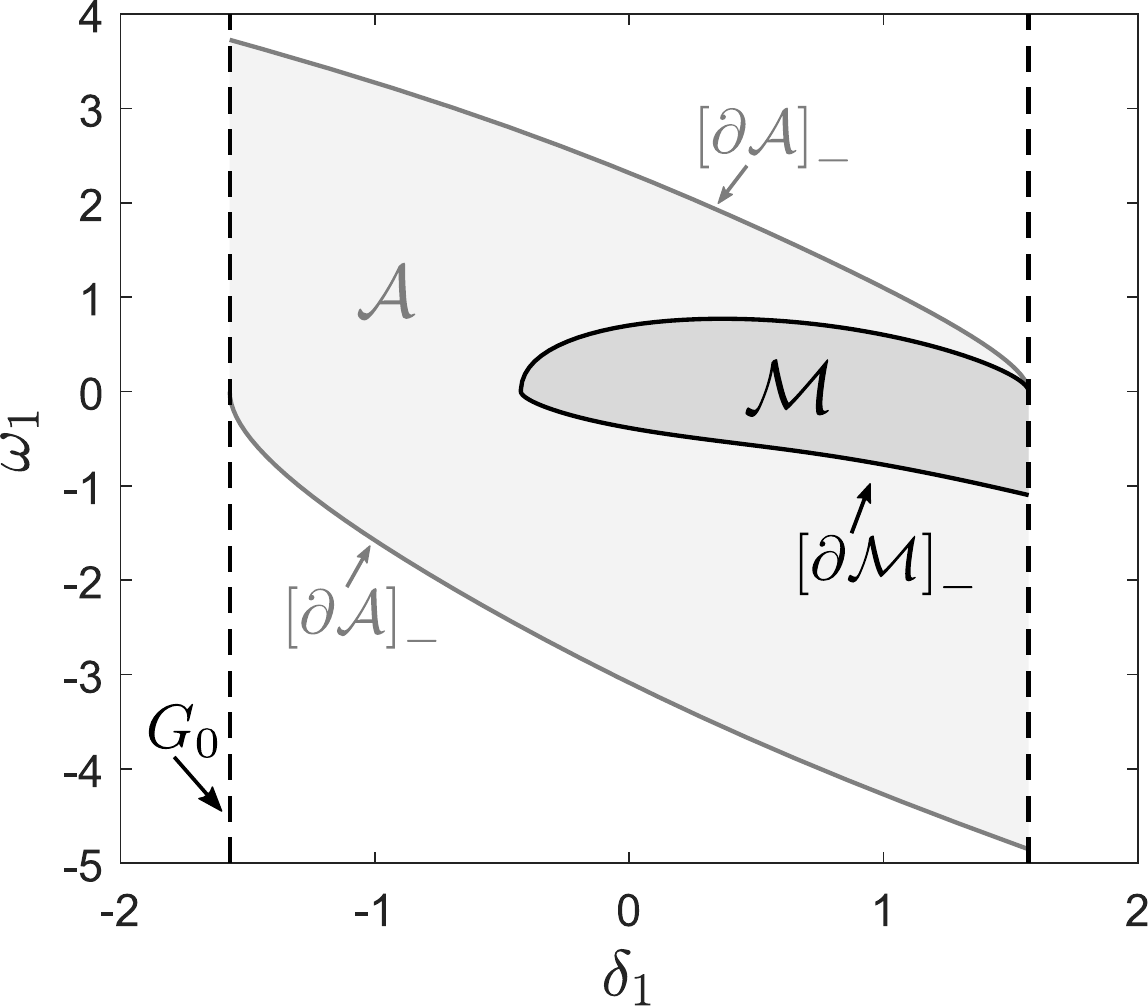} 
		%\vspace{-1mm}
		\subcaption{ $\overline{\delta}_2=\frac{\pi}{3.7}$}
	\end{subfigure}
	\begin{subfigure}[b]{1\linewidth}
		\centering
		\vspace{3mm}
		\includegraphics[width=5\linewidth,height=6.5cm,keepaspectratio]{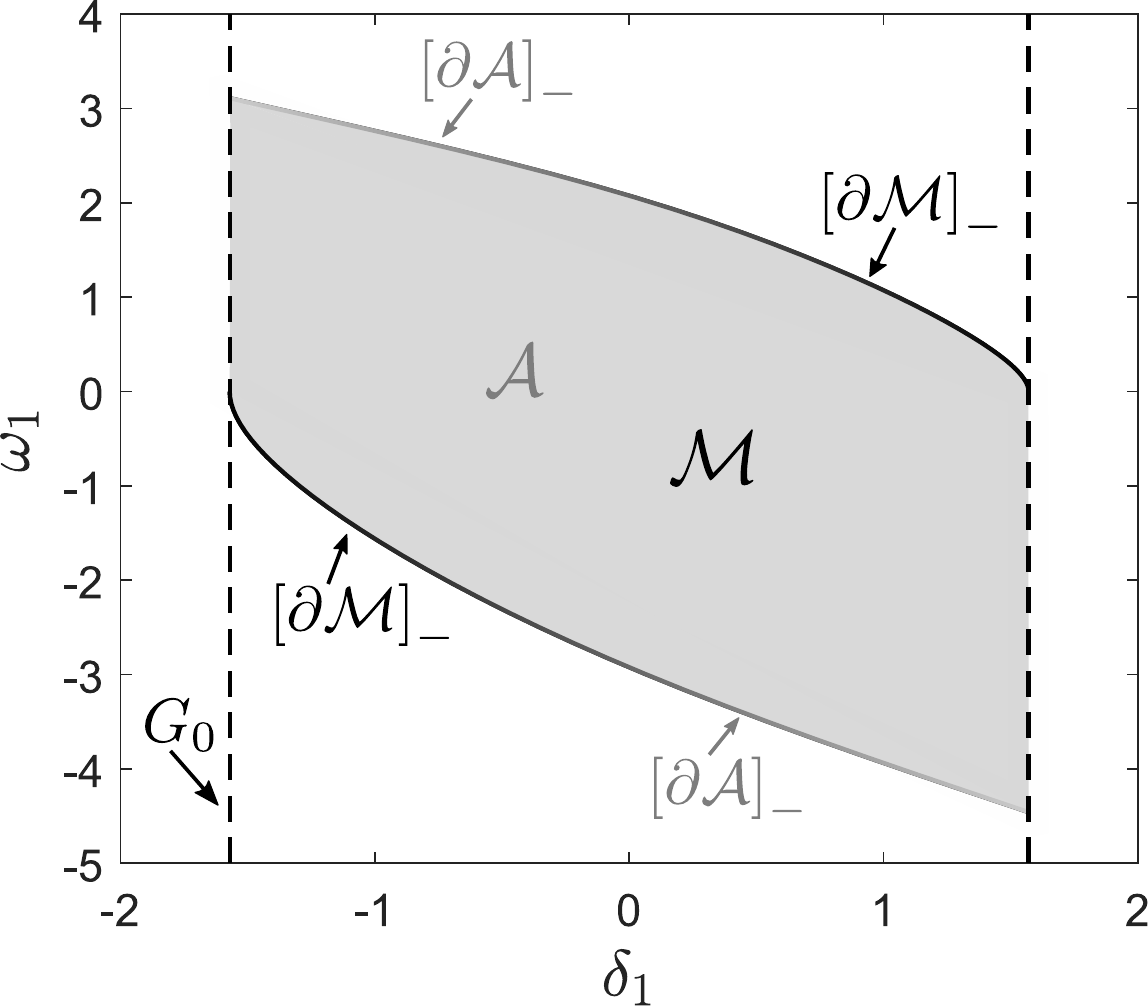} 
		%\vspace{-1mm}
		\subcaption{$\overline{\delta}_2=0$}
	\end{subfigure}
	\caption{The sets for the two-bus system generator node with different disturbance bounds $\overline{\delta}_2$. The black line is the invariance barrier for the MRPI the gray line the barrier for the admissible set.}
	\label{fig:twobus_A&M_G}   
	%\vspace{-5mm}
\end{figure}
Next, we concentrate on the load node with the parameters $k_2=1$ p.u., and $P_{d_2} = 0.7$ p.u.. The disturbance bound $\overline{\delta}_1=\frac{\pi}{2}$ and the different constraints on $\delta_2$ are given from before.
With \eqref{eq_load_condlbM} - \eqref{eq_load_condubA} we find the bounds for $\A$ and an upper bound for $\M$ within the constrained state space. 
The resulting sets for the load are shown in Fig. \ref{fig:twobus_A&M_L}.
\begin{figure}
	\centering
	\begin{subfigure}[b]{1\linewidth}
		\centering
		\includegraphics[width=7cm,keepaspectratio]{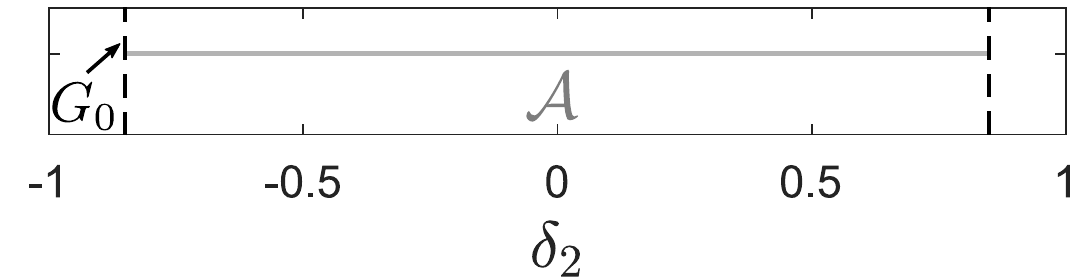} 
		\vspace{-1mm}
		\subcaption{$ |\delta_2| \leq \frac{\pi}{3.7}$}
		\vspace{1mm}
	\end{subfigure}
	\begin{subfigure}[b]{1\linewidth}
		\centering
		\includegraphics[width=7cm,keepaspectratio]{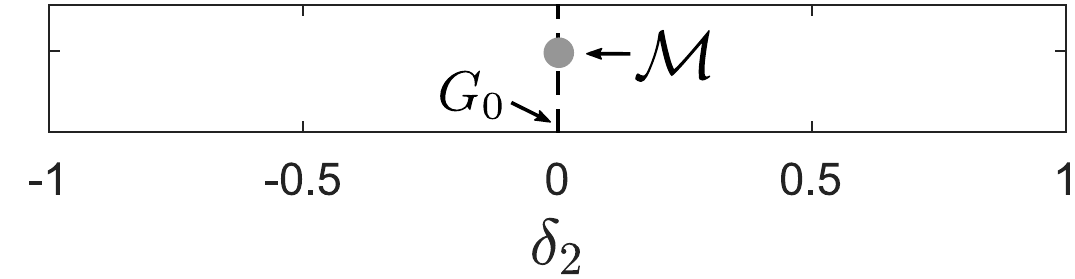} 
		\vspace{-1mm}
		\subcaption{$\delta_2=0$}
	\end{subfigure}
	\caption{The sets for the two-bus system load node with a fixed disturbance bound $\overline{\delta}_1= \frac{\pi}{2}$ and different constraints for ${\delta}_2$.}
	\label{fig:twobus_A&M_L}   
	\vspace{-5mm}
\end{figure}
\subsection{Multi-machine system}
%\todo[inline]{FROM HERE: need bigger example like KOMSO or 4 gens connected to infty bus, additionally time plots for delta}
This example shows the set-based analysis for the power networks components with several neighbouring nodes influencing each other.
We consider a six bus post-fault system with a complete graph structure, consisting of four generator nodes, one load node and one reference node.
Therefore each considered component has 5 neighbouring nodes. %, which influence are being part of the right hand side of equation \eqref{eq_generator} and \eqref{eq_load} in the analysis.
A sketch of the considered power network is shown in Figure~\ref{fig:sixbus_grid}
\begin{figure}[h]
	%	\vspace{-2mm}  
	\begin{center}\
		\includegraphics[width=0.95\linewidth,keepaspectratio]{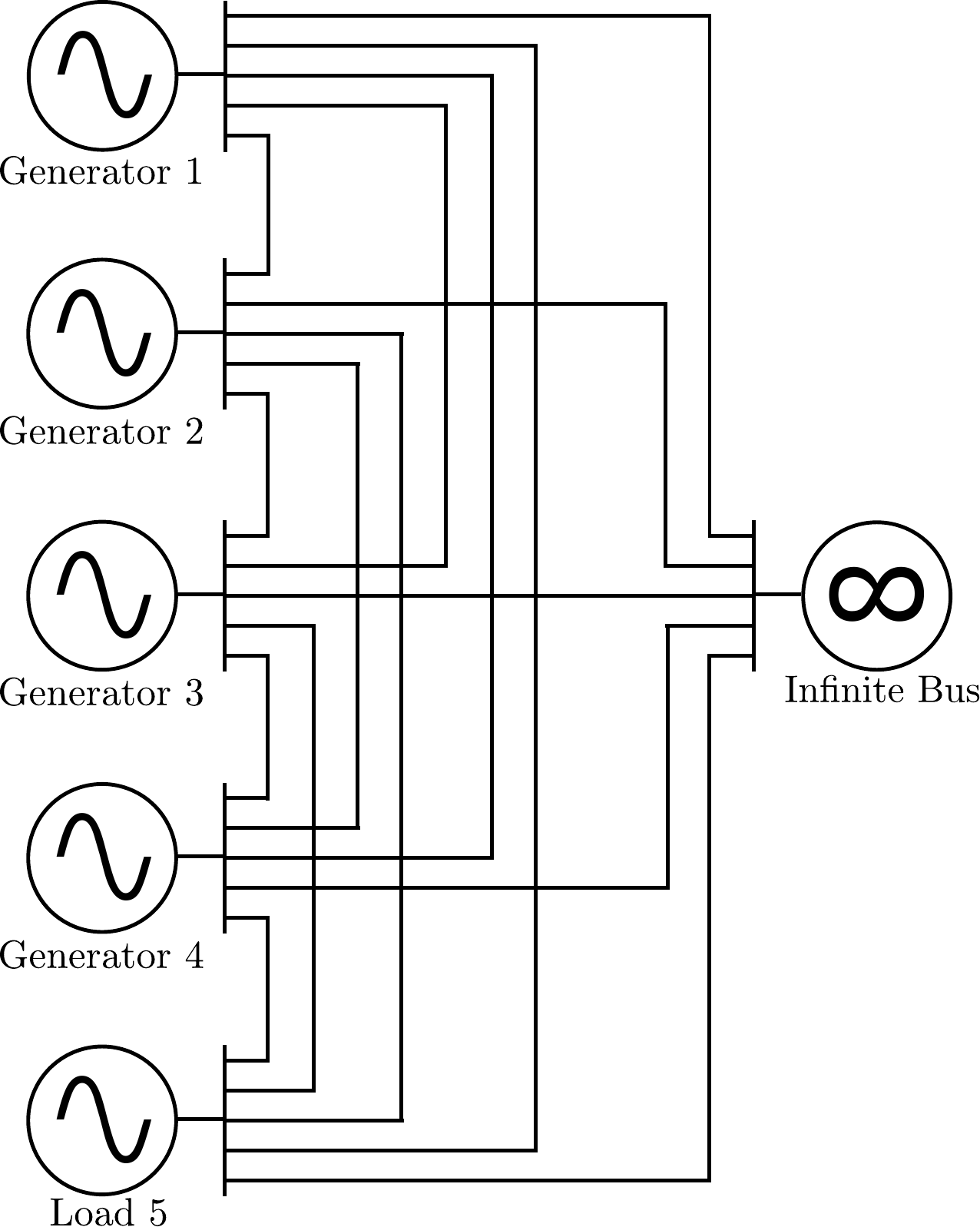} 
		\caption{Sketch of power network with four generator nodes and load bus and a reference node.} 
		\label{fig:sixbus_grid}                             
	\end{center}        
	%\vspace{-2mm}                        
\end{figure}

We use \eqref{eq_generator} for all $i\in\mathcal{G}$ ($i=1,...,4$) and \eqref{eq_load} for $i\in\mathcal{L}$ ($i=5$).
The grid components parameters are as follow $m_i = 1$ and $P_{m_i} = 0.1$, $\forall i \in \mathcal{G}$;
$P_{d_i} = 0.4$, $\forall i \in \mathcal{L}$;
$k_1=0.1$, $k_2=1$, $k_3=2$, $k_4=3$, $k_5=4$; 
$a_{ij}=0.2$, $\forall i \in {1,...,5}$, $\forall j \in {1,...,5}$, $j \not= i$, $a_{i6}=2$, $\forall i \in {1,...,5}$.
We define the constraints for all grid components $|\delta_i|\leq \frac{\pi}{2}$, $\forall i \in {1,...,5}$, from where we deduce the disturbance bounds $D_i^j\triangleq[-\frac{\pi}{2},\frac{\pi}{2}]$, $\forall i \in {1,...,5}$, $\forall j \in {1,...,5}$, $j \not= i$. 
We know from Section \ref{subsec_G_dbarUTC} that $(\pm\frac{\pi}{2},0)$ are the points of ultimate tangentiality for all generator nodes.
We obtain the sets for generator nodes via backwards integration of \eqref{eq_generator} and the corresponding adjoint system \eqref{eq_adj} from these points, using \eqref{dist_cases_invbarrier} and \eqref{dist_cases_barrier} as the disturbance realization.
The results are shown in Figure \ref{fig:6Bus_Generators_all}.
Clearly we can see the influence of different damping parameters $k_i$ for each machines sets, which can lead to an empty MRPI, as can be seen for generator 1.
From our set analysis we clearly identify generator 1 as a critical node, since there is no initial condition from which it is guaranteed that the constraints will be satisfied for all neighbouring node influences.
Therefore the grid operator should focus on protective measures for this node particularly.
\begin{figure}[h]
	%	\vspace{-2mm}  
	\begin{center}\
		\includegraphics[width=0.95\linewidth,height=15cm,keepaspectratio]{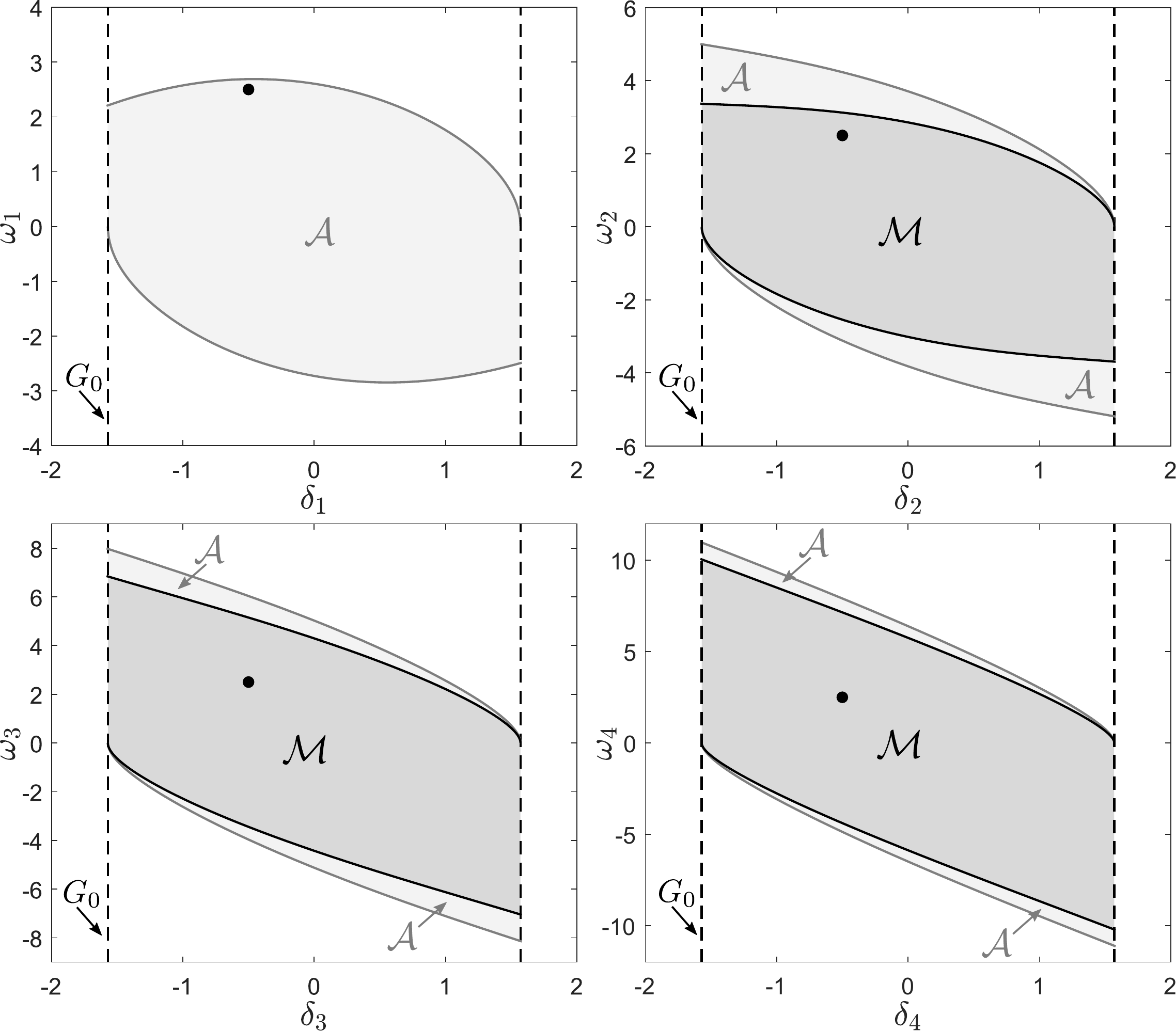} 
		\caption{Sets of all generator nodes in the six bus system with a black dot as initial state of the post fault system. Each considered generator node has a different damping parameter.}
		\label{fig:6Bus_Generators_all}                             
	\end{center}        
	%\vspace{-2mm}                        
\end{figure}
The load bounds are computed via \eqref{eq_load_condlbM} - \eqref{eq_load_condubA} with the result of a robust positively invariant interval in $G$, as shown in Figure~\ref{fig:6Bus_load}.
\begin{figure}[h]
	%	\vspace{-2mm}  
	\begin{center}\
		\includegraphics[width=7cm,keepaspectratio]{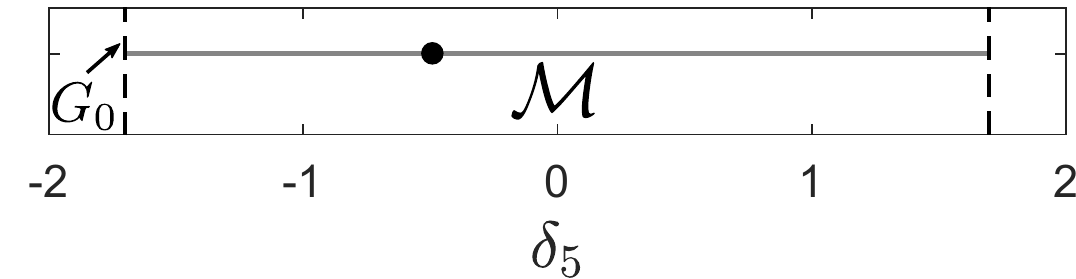} 
		\caption{Set of the load node in the six bus system with a black dot as initial state of the post fault system.} 
		\label{fig:6Bus_load}                             
	\end{center}        
	%\vspace{-2mm}                        
\end{figure}

We simulated the post-fault system with arbitrarily chosen initial conditions marked with a black dot in Figure~\ref{fig:6Bus_Generators_all} and Figure~\ref{fig:6Bus_load}.
The evolving trajectories of the grid components are shown in Figure \ref{fig:6Bus_Trajectories_time}.
Here we can see that our critical identified node (generator 1) will violate the angle constraints without any additional protective measures.
The incorporation of excitation systems is a possible measure to influence the systems transient behavior \cite{AndersonFouad2003_PowerSysControlStab}. %[pp.315-321]
One of the main factors that affect the transient performance of a generator are the parameters.
The generator excitation control effect can be reduced into an augmented damping parameter $k_i$ \cite{dorfler2012synchronization}.
Since, we used different numerical values for the damping parameter of each component in this example, the effect of a parameter change related to the damping is shown in Figure \ref{fig:6Bus_Generators_all}.
We also see that all trajectories initiating in the safe set $\M$ will not violate the constraints, which is necessary to assess safe grid operations in transient stability.
\begin{figure}[h]
	%	\vspace{-2mm}  
	\begin{center}\
		\includegraphics[width=6.5\linewidth,height=6.5cm,keepaspectratio]{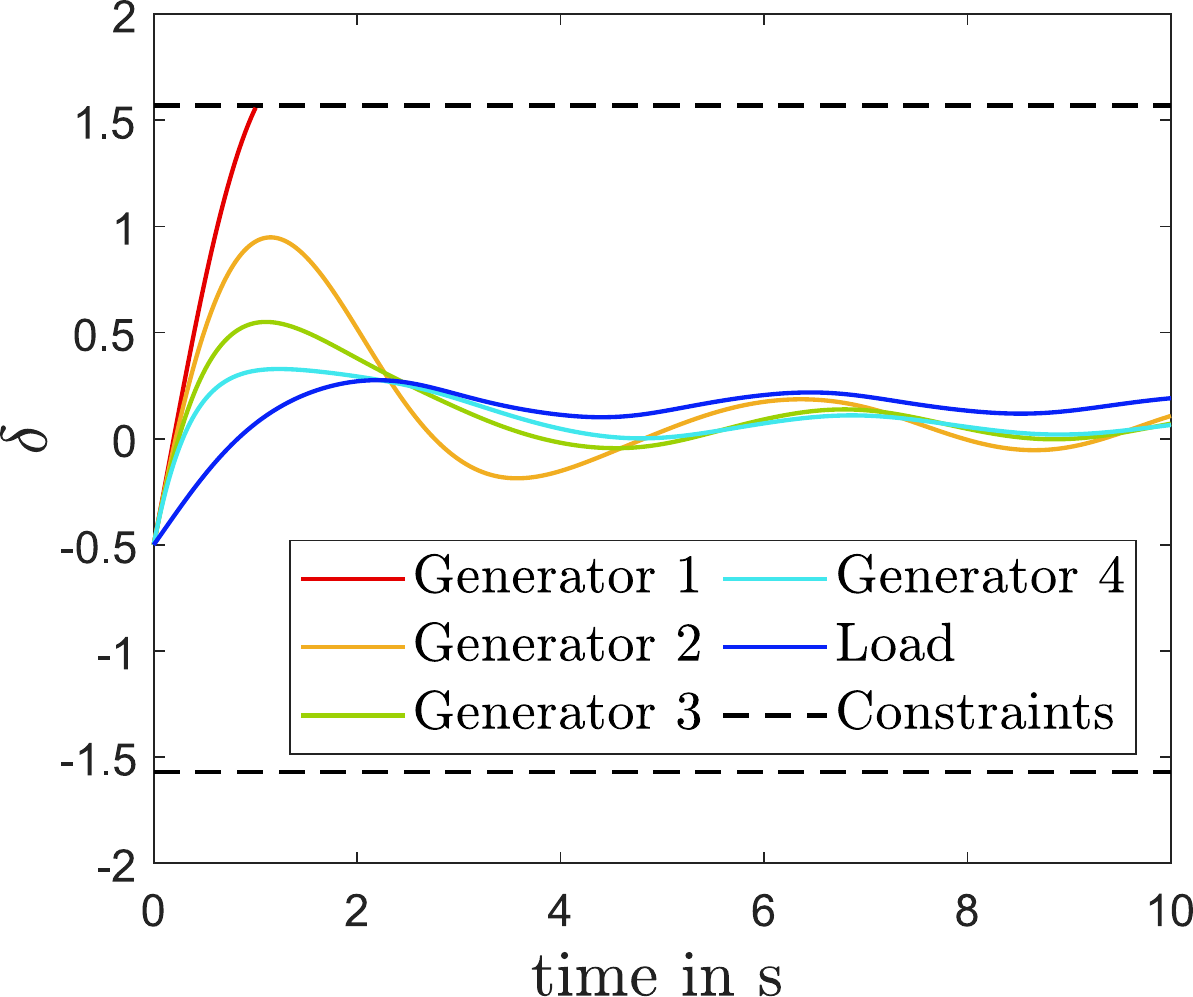} 
		\caption{Time behavior of the generator and load nodes angle in the six bus system.} 
		\label{fig:6Bus_Trajectories_time}                             
	\end{center}        
	%\vspace{-2mm}                        
\end{figure}

\section{Conclusion}\label{section:conclusion}

The main goal of this paper was to assess transient stability of power systems through a new set-based approach involving the admissible set and the MRPI.
We decomposed the overall system, considered each generator and load bus separately and treated the states of neighbouring nodes as disturbances.
We used the theory of \emph{barriers} and \emph{invariance barriers} to construct the admissible set and the MRPI for the system component dynamics.
The results of this paper have been illustrated by a number of examples with varying disturbance bounds and numerical changes of a model parameter showing the different effects on the sets.
More work will need to be done to determine quantitative and qualitative changes of the presented sets with changing constraints and model parameters.

\ifCLASSOPTIONcaptionsoff
  \newpage
\fi

% trigger a \newpage just before the given reference
% number - used to balance the columns on the last page
% adjust value as needed - may need to be readjusted if
% the document is modified later
%\IEEEtriggeratref{8}
% The "triggered" command can be changed if desired:
%\IEEEtriggercmd{\enlargethispage{-5in}}

% references section

% can use a bibliography generated by BibTeX as a .bbl file
% BibTeX documentation can be easily obtained at:
% http://mirror.ctan.org/biblio/bibtex/contrib/doc/
% The IEEEtran BibTeX style support page is at:
% http://www.michaelshell.org/tex/ieeetran/bibtex/
\bibliographystyle{IEEEtran}
% argument is your BibTeX string definitions and bibliography database(s)
%\bibliography{IEEEabrv,../bib/paper}
\bibliography{Literature_TransStab_AT}
\end{document}